\long\def\delete#1{}
\newtheorem{corollary}{Corollary}[section]
\newtheorem{definition}{Definition}[section]
\newtheorem{theorem}{Theorem}[section]
\newtheorem{example}{Example}[section]
\newtheorem{observation}{Observation}[section]
\def\diam{{\rm diam}}
\def\vp{\varphi}
\def\rn{{\rm rn}}
\def\Span{{\rm span}}
\def\ra{\rightarrow}
\def\dis{\displaystyle}
\newcommand{\be}{\begin{equation}}
\newcommand{\ee}{\end{equation}}
\newcommand{\ben}{\begin{equation*}}
\newcommand{\een}{\end{equation*}}
\newcommand{\bea}{\begin{eqnarray}}
\newcommand{\eea}{\end{eqnarray}}
\newcommand{\bean}{\begin{eqnarray*}}
\newcommand{\eean}{\end{eqnarray*}}
\begin{document}
\title{Optimal radio labelings of graphs}

\author{\textbf{Devsi Bantva}\footnote{E-mail : devsi.bantva@gmail.com (Devsi Bantva)} \\ Lukhdhirji Engineering College, Morvi - 363 642 \\ Gujarat (India)}

\pagestyle{myheadings}
\markboth{\centerline{Devsi Bantva}}{\centerline{Optimal radio labelings of graphs}}
\date{}
\openup 1.0\jot
\maketitle

\begin{abstract}
Let $\mathbb{N}$ be the set of positive integers. A radio labeling of a graph $G$ is a mapping $\vp : V(G) \rightarrow \mathbb{N} \cup \{0\}$ such that the inequality $|\vp(u)-\vp(v)| \geq \diam(G) + 1 - d(u,v)$ holds for every pair of distinct vertices $u,v$ of $G$, where $\diam(G)$ and $d(u,v)$ are the diameter of $G$ and distance between $u$ and $v$ in $G$, respectively. The radio number $\rn(G)$ of $G$ is the smallest number $k$ such that $G$ has radio labeling $\vp$ with $\max\{\vp(v) : v \in V(G)\}$ = $k$. Das \emph{et al.} [Discrete Math. $\mathbf{340}$(2017) 855--861] gave a technique to find a lower bound for the radio number of graphs. In [Algorithms and Discrete Applied Mathematics: CALDAM 2019, Lecture Notes in Computer Science $\mathbf{11394}$, springer, Cham, 2019, 161--173], Bantva modified this technique for finding an improved lower bound on the radio number of graphs and gave a necessary and sufficient condition to achieve the improved lower bound. In this paper, one more useful necessary and sufficient condition to achieve the improved lower bound for the radio number of graphs is given. Using this result, the radio number of the Cartesian product of a path and a wheel graphs is determined.\\[3mm]
{\bf Keywords:} Radio labeling, radio number, wheel graph, Cartesian product of graphs.\\[3mm]
{\bf 2020 Mathematics Subject Classification:} 05C78,05C15,05C12.
 \end{abstract}

\section{Introduction}
The \emph{channel assignment problem} is the problem to assign channels to each TV or  radio transmitters such that the interference constraints are satisfied and the use of spectrum is minimized. The problem was first introduced by Hale \cite{Hale} in 1980. The interference between transmitters is closely related to geographic location of the transmitters. The closer the transmitters are, the higher the interference is and vice-versa. Hence, the frequency difference between two radio channels assigned to radio transmitters is in the inverse proportion to the distance between two transmitters. Initially only two level interference, namely \emph{high} and \emph{low}, was considered and accordingly, two transmitters are called \emph{very close} and \emph{close}, respectively. In a private communication with Griggs during 1988, Robert proposed a variation of the channel assignment problem in which close transmitters must receive different channels and very close transmitters must receive channels that are at least two apart. The problem is studied by mathematicians using graphs labeling approach.

In a graph, the transmitters are represented by vertices and two vertices are adjacent if two transmitters are very close and distance two apart if they are close. The problem of assignment of channels to transmitters is associated with graph labeling problem. Motivated through this problem, Griggs and Yeh introduced $L(2,1)$-labeling (or distance two labeling) in \cite{Griggs} as follows: An $L(2,1)$-labeling of a graph $G=(V(G),E(G))$ is a function $\vp$ from the vertex set $V(G)$ to the set of non-negative integers such that $|\vp(u)-\vp(v)| \geq 2$ if $d(u,v)=1$ and $|\vp(u)-\vp(v)| \geq 1$ if $d(u,v)=2$, where $d(u,v)$ is the distance between $u$ and $v$ in $G$. The \emph{span of $\vp$} is defined as $\Span(\vp) = \max\{|\vp(u)-\vp(v)| : u, v \in V(G)\}$. The $\lambda$-number, denoted by $\lambda(G)$, is defined as the minimum span over all $L(2,1)$-labelings of $G$. The $L(2,1)$-labeling and other distance two labeling problems have been studied by many researchers in the past two and half decades; for example, see the survey articles \cite{Calamoneri,Yeh}.

In \cite{Chartrand1,Chartrand2}, Chanrtrand \emph{et al.} extended the constraint on distance from two to the largest possible distance; the diameter of graph $G$ and introduced the concept of radio labeling as follows:
\begin{definition} A \emph{radio labeling} of a graph $G$ is a mapping $\vp : V(G) \ra \mathbb{N} \cup \{0\}$ $(\mathbb{N}$ is the set of natural numbers$)$ such that the following is satisfied for every pair of distinct vertices $u, v \in V(G)$,
\begin{equation}\label{eqn:rndef}
|\vp(u)-\vp(v)| \geq \diam(G)+1-d(u,v).
\end{equation}
The assigned integer $\vp(u)$ is called the \emph{label} of $u$ under $\vp$ and, the \emph{span of $\vp$} is defined as $$\Span(\vp) = \max\{|\vp(u)-\vp(v)| : u, v \in V(G)\}.$$ The \emph{radio number} of $G$, denoted by $\rn(G)$, is defined as
$$ \rn(G) := \dis\min_{\vp}\{\Span(\vp)\}$$
with minimum taken over all radio labelings $\vp$ of $G$. A radio labeling $\vp$ is \emph{optimal} if $\Span(\vp) = \rn(G)$.
\end{definition}

It is clear that an optimal radio labeling $\vp$ always assign 0 to some vertex and hence the span of $\vp$ is the maximum integer assigned by $\vp$. A radio labeling is a one-to-one integral function from $V(G)$ to the set of non-negative integers. Therefore, any radio labeling $\vp$ induces an ordering $O_{\vp}(V(G)):=(x_0,x_1,\ldots,x_{p-1})$ of $V(G)$ such that $0=\vp(x_0)<\vp(x_1)<\ldots<\vp(x_{p-1})=\Span(\vp)$, where $p=|V(G)|$. It is clear that if $\vp$ is an optimal radio labeling then $\Span(\vp) \leq \Span(\psi)$ for any other radio labeling $\psi$ of $G$.

A radio labeling problem is recognized as one of the tough graph labeling problems. In \cite{Chartrand1,Chartrand2}, Chartrand \emph{et al.} gave an upper bound for the radio number of paths and cycles. Liu and Zhu determined the exact radio number for paths and cycles in \cite{Zhu}. Even determining the radio number for basic graph families like paths and cycles was challenging. In \cite{Vaidya1,Vaidya2,Vaidya3}, Vaidya and Bantva determine the radio number for total graph of paths, strong product $P_2$ with $P_n$ and linear cacti. The radio number of trees remain the focus of many researchers in recent years. In \cite{Halasz}, Hal\'asz and Tuza determine the radio number of level-wise regular trees. In \cite{Li}, Li \emph{et al.} determine the radio number of complete $m$-ary trees. In \cite{Daphne1}, Liu gave a lower bound for the radio number of trees and, a necessary and sufficient condition to achieve the lower bound; the author presented a class of trees, namely spiders, achieving this lower bound. In \cite{Bantva1}, Bantva \emph{et al.} gave a different necessary and sufficient condition to achieve this lower bound and presented banana trees, firecrackers trees and a special class of caterpillars achieving this lower bound. Recently, Bantva and Liu gave a lower bound for the radio number of block graphs and three necessary and sufficient condition to achieve the lower bound in \cite{Bantva4}. They also discussed the radio number of line graph of trees and block graphs. Liu \emph{et al.} also studied the radio $k$-labeling of trees in \cite{Chavez}. In \cite{Das}, Das \emph{et al.} gave a technique to find a lower bound for the radio number of graphs. In \cite{Bantva3}, Bantva improved this technique to find a lower bound for the radio number of graphs and gave a necessary and sufficient condition to achieve the improved lower bound. Using these results, the author determined the radio number of the Cartesian product of paths and Peterson graph.

In this paper, one more useful necessary and sufficient condition to achieve the improved lower bound for the radio number of graphs given in \cite{Bantva3} is established. Some subgraphs of a given graph $G$ are characterized such that if the radio number of $G$ achieves the lower bound given in \cite{Bantva3} then these subgraphs also achieve the lower bound. Using these results, the radio number of the Cartesian product of the path graphs with wheel, star and the friendship graphs are determined.

\section{Preliminaries}

The book \cite{West} is followed for standard graph theoretic terms and notation. Only simple finite connected graphs are considered  throughout this paper. The distance $d_G(u,v)$ between two vertices $u$ and $v$ is the least length of a path joining $u$ and $v$ in a graph $G$. The suffix is dropped whenever the graph $G$ is clear in the context. The diameter of a graph $G$, denoted by $\diam(G)$, is $\max\{d_G(u,v) : u, v \in V(G)\}$. The neighborhood of $v \in G$, written as $N(v)$, is the set of vertices adjacent to $v$. Let $S \subseteq V(G)$. Define $N(S) = \{u \in V(G)\setminus S : u \mbox{ is adjacent to }v \in S\}$. The subgraph induced by $S$, denoted by $G(S)$, is a subgraph of $G$ whose vertex set is $S$ and edge set is $E(G(S)) = \{e = (u,v) \in E(G) : u, v \in S\}$. For any $u \in V(G)$, let $d_G(u,S) = \min\{d_{G}(u,v) : v \in S\}$ and $\diam(S) = \max\{d_{G}(u,v) : u, v \in S\}$. It is clear that if $|S|=1$ then $\diam(S) = 0$.

Let $H$ be an induced connected subgraph of $G$ with $\diam(H) = k$. Define layers $L_i$ of graph $G$ with respect to subgraph $H$ as follows: Set $L_0 = V(H)$ and $L_1 = N(L_0)$. Recursively define $L_{i+1} = N(L_i)$ for $1 \leq i \leq h-1$, where $$h = \max\{d_G(u,H) : u \in V(G)\},$$ which is known as the maximum level in a graph $G$. Since $G$ is a connected graph, $L_i \neq \emptyset$ for $0 \leq i \leq h$. Define the total distance of layers of graph $G$, denoted by $L(G)$, as
$$L(G) := \dis\sum_{i=1}^{h}|L_i|i.$$
For a graph $G$, define
\begin{equation*}
\delta(G) = \left\{
\begin{array}{ll}
1, & \mbox{if $|L_0| = 1$}; \\[0.2cm]
0, & \mbox{if $|L_0| \geq 2$}.
\end{array}
\right.
\end{equation*}
Let $G$ be any connected graph then for any $u,v \in V(G)$, note that the distance between $u$ and $v$ in a graph $G$ satisfies the following inequality: \begin{equation}\label{eqn:dist}
d(u,v) \leq d(u,L_0)+d(v,L_0)+\diam(L_0).
\end{equation}

In \cite{Das}, Das \emph{et al.} gave a technique to find a lower bound for the radio number of graphs. In \cite{Bantva3}, Bantva improved this technique and gave a lower bound for the radio number of graphs which is given in the following theorem.

\begin{theorem}\cite{Bantva3}\label{thm:lb} Let $G$ be a simple connected graph of order $p$, diameter $d$ and $L_0 \subseteq V(G)$. Denote $k = \diam(L_0)$ and $\delta = \delta(G)$. Then
\begin{equation}\label{eqn:lb}
\rn(G) \geq (p-1)(d-k+1)+\delta-2L(G).
\end{equation}
\end{theorem}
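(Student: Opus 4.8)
The plan is to bound the span of an arbitrary radio labeling from below and then take the minimum over all labelings. Let $\vp$ be any radio labeling of $G$ and let $O_{\vp}(V(G)) = (x_0, x_1, \ldots, x_{p-1})$ be the induced ordering, so that $0 = \vp(x_0) < \vp(x_1) < \cdots < \vp(x_{p-1}) = \Span(\vp)$. First I would write the span as the telescoping sum $\Span(\vp) = \sum_{i=0}^{p-2}(\vp(x_{i+1}) - \vp(x_i))$ and apply the radio condition \eqref{eqn:rndef} to each consecutive pair, which gives $\vp(x_{i+1}) - \vp(x_i) \geq d + 1 - d(x_i, x_{i+1})$. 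Summing these $p-1$ inequalities yields $\Span(\vp) \geq (p-1)(d+1) - \sum_{i=0}^{p-2} d(x_i, x_{i+1})$, so the whole problem reduces to finding a good upper bound on the total consecutive distance $\sum_{i=0}^{p-2} d(x_i, x_{i+1})$.

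Next I would control this sum using the layer structure. Applying \eqref{eqn:dist} to each consecutive pair gives $d(x_i, x_{i+1}) \leq d(x_i, L_0) + d(x_{i+1}, L_0) + k$, and summing over $i$ contributes $(p-1)k$ from the constant term together with $\sum_{i=0}^{p-2}\left(d(x_i, L_0) + d(x_{i+1}, L_0)\right)$. The key observation is that in this last sum every vertex $x_j$ is counted exactly twice except the two endpoints $x_0$ and $x_{p-1}$, which are counted once; hence it equals $2\sum_{j=0}^{p-1} d(x_j, L_0) - d(x_0, L_0) - d(x_{p-1}, L_0)$. Because $L_i$ is precisely the set of vertices at distance $i$ from $L_0$, we have $\sum_{j=0}^{p-1} d(x_j, L_0) = \sum_{i=1}^{h} |L_i| i = L(G)$. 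Substituting back produces $\Span(\vp) \geq (p-1)(d - k + 1) - 2L(G) + d(x_0, L_0) + d(x_{p-1}, L_0)$.

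Finally I would recover the $\delta$ term from the two endpoint distances. If $|L_0| \geq 2$ then $\delta = 0$ and the bound $d(x_0, L_0) + d(x_{p-1}, L_0) \geq 0 = \delta$ is immediate. If $|L_0| = 1$, say $L_0 = \{w\}$, then since $x_0 \neq x_{p-1}$ at most one of them can equal $w$, so at least one endpoint lies at distance at least $1$ from $L_0$, giving $d(x_0, L_0) + d(x_{p-1}, L_0) \geq 1 = \delta$. In either case the endpoint contribution is at least $\delta$, and the desired inequality $\Span(\vp) \geq (p-1)(d-k+1) + \delta - 2L(G)$ follows; taking the minimum over all radio labelings $\vp$ gives \eqref{eqn:lb}. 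The main obstacle is the middle step: both the correct use of the triangle-type inequality \eqref{eqn:dist} and the bookkeeping that identifies the doubled sum of distances-to-$L_0$ with $2L(G)$ require care, since the careful count of the endpoint terms is exactly what is needed to extract the additive $\delta$ and make the bound as sharp as possible.
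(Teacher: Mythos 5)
Your proof is correct and follows essentially the same route as the source: the paper does not reproduce the proof of this cited theorem, but the telescoping identity $(p-1)(d+1)-\sum_{i=0}^{p-2}(d(x_i,L_0)+d(x_{i+1},L_0)+k) = (p-1)(d-k+1)-2L(G)+d(x_0,L_0)+d(x_{p-1},L_0)$ that you derive is exactly the computation appearing in the sufficiency part of Theorem \ref{thm:main2}, combined with the consecutive-pair radio condition and inequality \eqref{eqn:dist}. Your endpoint case analysis correctly extracts the $\delta$ term, so nothing is missing.
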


Although, both the lower bounds given in \cite{Das} and \cite{Bantva3} seems to be identical in notation but the difference lies in  fixing the $L_0$. In \cite{Das}, Das \emph{et al.} set a vertex or a clique of graph $G$ as $L_0$ while Bantva set all vertices of an induced subgraph $H$ of $G$ as $L_0$ with the property that two non-adjacent vertices of $V(H)$ have distance equal to $\diam(L_0)$. The readers may notice that this improved technique gives a better lower bound for the radio number of graphs, which is sharp for some classes of graph. The author of \cite{Bantva3} presented one such class of graphs, which consists of the Cartesian product of the path graph and Peterson graph in \cite{Bantva3}. In this paper, the condition to fix $L_0$ is further relaxed as follows. Set $L_0 = V(H)$, where $H$ is a connected induced subgraph of $G$ with the property that the vertices of $G$ can be ordered as $x_0,x_1,\ldots,x_{p-1}$ such that $d(x_i,x_{i+1}) = d(x_i,L_0)+d(x_{i+1},L_0)+\diam(L_0)$ for $0 \leq i \leq p-2$.

In \cite{Bantva3}, Bantva also gave a necessary and sufficient condition (given in the next theorem) to achieve the lower bound \eqref{eqn:lb} for the radio number of graphs.

\begin{theorem}\cite{Bantva3}\label{thm:main1} Let $G$ be a simple connected graph of order $p$, diameter $d$ and $L_0$ is as described earlier. Denote $k = \diam(L_0)$ and $\delta = \delta(G)$. Then
\begin{equation}
\rn(G) = (p-1)(d-k+1)+\delta-2L(G)
\end{equation}
holds if and only if there exists a radio labeling $\vp$ with $0 = \vp(x_0) < \vp(x_1) < \ldots < \vp(x_{p-1}) = \Span(\vp) = \rn(G)$ such that all the following hold for $0 \leq i \leq p-1:$
\begin{enumerate}[\rm (a)]
    \item $d(x_i,x_{i+1}) = d(x_i,L_0)+d(x_{i+1},L_0)+k$,
    \item $x_0,x_{p-1} \in L_0$ if $|L_{0}| \geq 2$ and $x_0 \in L_0$, $x_{p-1} \in L_1$ if $|L_0| = 1$,
    \item $\vp(x_0)=0$ and $\vp(x_{i+1}) =  \vp(x_i)+d+1-d(x_i,L_0)-d(x_{i+1},L_0)-k$.
\end{enumerate}
\end{theorem}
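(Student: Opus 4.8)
The plan is to establish both implications by re-running the proof of the lower bound \eqref{eqn:lb} and recording precisely which step each of (a)--(c) corresponds to. For any radio labeling $\vp$ with induced ordering $(x_0,\ldots,x_{p-1})$, I would start from the telescoping identity $\Span(\vp) = \sum_{i=0}^{p-2}(\vp(x_{i+1})-\vp(x_i))$. Bounding each consecutive gap from below by \eqref{eqn:rndef} gives $\vp(x_{i+1})-\vp(x_i) \geq d+1-d(x_i,x_{i+1})$, and bounding $d(x_i,x_{i+1})$ from above by \eqref{eqn:dist} (with $\diam(L_0)=k$) yields
\[
\Span(\vp) \geq (p-1)(d+1) - (p-1)k - \dis\sum_{i=0}^{p-2}\big(d(x_i,L_0)+d(x_{i+1},L_0)\big).
\]
The key combinatorial observation is that in this last sum every interior term $d(x_j,L_0)$ is counted twice while the two endpoints are counted once, so it equals $2\sum_{v\in V(G)}d(v,L_0) - d(x_0,L_0) - d(x_{p-1},L_0)$; and since $d(v,L_0)=i$ exactly when $v\in L_i$, we have $\sum_{v\in V(G)}d(v,L_0)=\sum_{i=1}^{h}|L_i|i = L(G)$. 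Substituting gives
\[
\Span(\vp) \geq (p-1)(d-k+1) - 2L(G) + d(x_0,L_0) + d(x_{p-1},L_0),
\]
and the proof of \eqref{eqn:lb} is completed by the bound $d(x_0,L_0)+d(x_{p-1},L_0) \geq \delta$, which I would verify by cases on $|L_0|$ (using that $x_0 \neq x_{p-1}$, so at most one of them can lie in a singleton $L_0$).

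With this chain in hand the forward (necessity) direction is immediate: assuming $\rn(G)$ equals the right-hand side of \eqref{eqn:lb}, take an optimal $\vp$; then $\Span(\vp)$ meets the bound, forcing every inequality above to be an equality. Equality in \eqref{eqn:dist} for each consecutive pair is exactly (a); equality in \eqref{eqn:rndef} combined with (a) gives the recurrence in (c); and equality in $d(x_0,L_0)+d(x_{p-1},L_0) \geq \delta$ gives (b). In the case $|L_0|=1$ I would, if necessary, replace $\vp$ by its reversal $v\mapsto \Span(\vp)-\vp(v)$ (again an optimal labeling) to arrange $x_0\in L_0$ and $x_{p-1}\in L_1$ rather than the other way around.

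For the backward (sufficiency) direction I would take the radio labeling $\vp$ guaranteed by (a)--(c) and simply compute its span: telescoping (c) and substituting (a) turns $\Span(\vp)=\sum_{i=0}^{p-2}(\vp(x_{i+1})-\vp(x_i))$ into the same expression as above but now with equality throughout, and applying (b) collapses the endpoint terms to $\delta$. This shows $\Span(\vp) = (p-1)(d-k+1)+\delta-2L(G)$. Since Theorem \ref{thm:lb} guarantees $\rn(G)$ is at least this quantity and $\vp$ attains it, $\vp$ is optimal and equality in \eqref{eqn:lb} holds.

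The main obstacle I anticipate is not the algebra of the telescoping chain but the careful bookkeeping around the $\delta$ term and condition (b): one must pin down exactly when $d(x_0,L_0)+d(x_{p-1},L_0)$ achieves its minimum value $\delta$, distinguishing $|L_0|\geq 2$ (both endpoints in $L_0$) from $|L_0|=1$ (one endpoint in $L_0$, the other in $L_1$), and justify the reversal symmetry that fixes the orientation of the ordering. A secondary point requiring care is confirming the double-counting identity and its reduction to $L(G)$, which is the hinge connecting the endpoint distances to the total distance of layers.
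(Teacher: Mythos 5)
Your argument is correct: the telescoping of consecutive gaps, the bound $d(x_i,x_{i+1})\leq d(x_i,L_0)+d(x_{i+1},L_0)+k$ from \eqref{eqn:dist}, the double-counting identity reducing to $2L(G)$ minus the endpoint terms, and the case analysis on $\delta$ (with the reversal trick to orient the ordering when $|L_0|=1$) are exactly the right ingredients, and tightness of the overall bound does force termwise equality as you claim. The paper itself states this theorem as a citation to \cite{Bantva3} without reproducing a proof, but the same telescoping computation appears verbatim in its proof of Theorem \ref{thm:main2}, so your approach is essentially the intended one.
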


\section{Main Result}
In this Section, we give one more useful necessary and sufficient condition to achieve the improved lower bound for the radio number of graph given in \cite{Bantva3}, which rely only on the ordering of vertices of a graph.

\begin{theorem}\label{thm:main2} Let $G$ be a simple connected graph of order $p$, diameter $d \geq 2$ and $L_0$ is fixed in $G$ as described earlier. Denote $k = \diam(L_0)$ and $\delta = \delta(G)$. Then
\begin{equation}\label{rn:main}
\rn(G) = (p-1)(d-k+1)+\delta-2L(G)
\end{equation}
holds if and only if there exists an ordering $O(V(G)):=(x_0,x_1,\ldots,x_{p-1})$ of $V(G)$ such that the following conditions are satisfy.
\begin{enumerate}[\rm (a)]
    \item $d(x_0,L_0)+d(x_{p-1},L_0) = 1$ if $|L_0| = 1$ and  $d(x_0,L_0)+d(x_{p-1},L_0) = 0$ if $|L_0|\geq 2$;
    \item the distance between two vertices $x_i$ and $x_j$ $(0 \leq i < j \leq p-1)$ satisfy
\begin{equation}\label{eqn:dij}
d(x_i,x_j) \geq \dis\sum_{t=i}^{j-1}(d(x_t,L_0)+d(x_{t+1},L_0)+k-d-1)+d+1.
\end{equation}
\end{enumerate}
Moreover, under the conditions (a) and (b), the mapping $\vp$ defined by
\begin{equation}\label{eqn:vp0}
\vp(x_0) = 0,
\end{equation}
\begin{equation}\label{eqn:vp1}
\vp(x_{i+1}) = \vp(x_i) + d + 1 - d(x_i,L_0) - d(x_{i+1},L_0) - k, \; 0 \leq i \leq p-2
\end{equation}
is an optimal radio labeling of $G$.
\end{theorem}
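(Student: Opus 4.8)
The plan is to show that conditions (a) and (b) are exactly a repackaging of the hypotheses of Theorem \ref{thm:main1}, recast so that they refer only to the vertex ordering and the distance function rather than to an a priori given labeling. The computational backbone is the identity $\sum_{i=0}^{p-1} d(x_i,L_0) = L(G)$, which holds because $d(v,L_0)$ is precisely the index of the layer containing $v$, so summing over all vertices reproduces $\sum_{i=1}^{h}|L_i|i$. I would first record this, together with the elementary observation that condition (a) is equivalent to $d(x_0,L_0)+d(x_{p-1},L_0) = \delta$ in both cases $|L_0|=1$ and $|L_0|\geq 2$.

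The first main step is to compute the span of the map $\vp$ defined by \eqref{eqn:vp0}--\eqref{eqn:vp1}. Telescoping gives $\vp(x_{p-1}) = (p-1)(d+1-k) - \sum_{i=0}^{p-2}\bigl(d(x_i,L_0)+d(x_{i+1},L_0)\bigr)$, and since every interior term $d(x_i,L_0)$ is counted twice while the two endpoint terms are counted once, the sum equals $2L(G) - d(x_0,L_0) - d(x_{p-1},L_0) = 2L(G) - \delta$ by condition (a). Hence $\vp(x_{p-1}) = (p-1)(d-k+1)+\delta-2L(G)$, which is exactly the value appearing in the lower bound \eqref{eqn:lb}. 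Therefore, once $\vp$ is shown to be a genuine radio labeling with strictly increasing labels, optimality follows immediately from Theorem \ref{thm:lb}.

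For the sufficiency direction I would then observe that condition (b) carries a double meaning. Specializing \eqref{eqn:dij} to a consecutive pair $j=i+1$ yields $d(x_i,x_{i+1}) \geq d(x_i,L_0)+d(x_{i+1},L_0)+k$, while \eqref{eqn:dist} gives the reverse inequality; hence equality holds, the increment in \eqref{eqn:vp1} equals $d+1-d(x_i,x_{i+1}) \geq 1$, and so the labels are strictly increasing with minimum $\vp(x_0)=0$ and maximum $\vp(x_{p-1})$. For a general pair $i<j$, telescoping \eqref{eqn:vp1} gives $\vp(x_j)-\vp(x_i) = \sum_{t=i}^{j-1}\bigl(d+1-d(x_t,L_0)-d(x_{t+1},L_0)-k\bigr)$, and rearranging the desired radio inequality $\vp(x_j)-\vp(x_i) \geq d+1-d(x_i,x_j)$ into a lower bound on $d(x_i,x_j)$ reproduces \eqref{eqn:dij} verbatim. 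Thus condition (b) is logically equivalent to the assertion that $\vp$ satisfies \eqref{eqn:rndef} for every pair, which combined with the span computation completes sufficiency.

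For necessity I would invoke Theorem \ref{thm:main1}: if equality holds in \eqref{eqn:lb}, there is a radio labeling and an ordering satisfying conditions (a)--(c) of that theorem. Condition (b) of Theorem \ref{thm:main1} forces the endpoints into $L_0$ (respectively into $L_0$ and $L_1$), which translates to $d(x_0,L_0)+d(x_{p-1},L_0)=\delta$, i.e.\ condition (a) here; condition (c) of Theorem \ref{thm:main1} is precisely \eqref{eqn:vp1}, so feeding it into the radio inequality and telescoping as above yields \eqref{eqn:dij}, i.e.\ condition (b) here. The step I expect to require the most care is the sufficiency argument, specifically the simultaneous use of \eqref{eqn:dij} and \eqref{eqn:dist} to pin the consecutive distances to equality; this is the only point where the defining property of $L_0$ is genuinely invoked, and it is what guarantees both the monotonicity of the labels and that the family of inequalities \eqref{eqn:dij} is consistent with the triangle-type bound rather than vacuous.
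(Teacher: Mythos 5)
Your proposal is correct and follows essentially the same route as the paper: necessity by translating conditions (a)--(c) of Theorem \ref{thm:main1} into the ordering conditions, and sufficiency by telescoping \eqref{eqn:vp1} to verify the radio condition via \eqref{eqn:dij} and to compute the span, then invoking the lower bound \eqref{eqn:lb}. Your additional observation that \eqref{eqn:dij} with $j=i+1$ together with \eqref{eqn:dist} forces equality on consecutive distances (hence strictly increasing labels) is a detail the paper leaves implicit, but it does not change the argument.
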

\begin{proof}\textsf{Necessity:} Suppose that \eqref{rn:main} holds then there exists an optimal radio labeling $\vp$ of $G$ which induces an ordering $O_{\vp}(V(G)):=(x_0,x_1,\ldots,x_{p-1})$ of $V(G)$ with $0 = \vp(x_0) < \vp(x_1) < \ldots < \vp(x_{p-1}) = \Span(\vp) = \rn(G)$ such that the conditions (a)-(c) of Theorem \ref{thm:main1} hold. By Theorem \ref{thm:main1} (b), it is clear that $d(x_0,L_0)+d(x_{p-1},L_0) = 1$ when $|L_0| = 1$ and $d(x_0,L_0)+d(x_{p-1},L_0) = 0$ when $|L_0| \geq 2$. By Theorem \ref{thm:main1} (c), for any two vertices $x_i$ and $x_j$ $(j > i)$ in ordering $O_{\vp}(V(G)):=(x_0,x_1,\ldots,x_{p-1})$ of $V(G)$, we obtain
$$\vp(x_j)-\vp(x_i) = \dis\sum_{t=i}^{j-1}(d + 1 - d(x_t,L_0) - d(x_{t+1},L_0) - k).$$
Note that $\vp$ is a radio labeling of $G$ and so $\vp(x_j)-\vp(x_i) \geq d+1-d(x_i,x_{j})$. Substituting this in above equation, we obtain
\begin{equation}
d(x_i,x_j) \geq \dis\sum_{t=i}^{j-1}(d(x_t,L_0)+d(x_{t+1},L_0)+k-d-1)+d+1.
\end{equation}

\textsf{Sufficiency}:~Suppose that an ordering $O(V(G)):=(x_0, x_1, \ldots, x_{p-1})$ of $V(G)$ satisfies conditions (a)-(b) of hypothesis and $\vp$ is defined by \eqref{eqn:vp0} and \eqref{eqn:vp1}. Note that it is enough to prove that $\vp$ is a radio labeling with span equal to the right-hand side of \eqref{rn:main}. Let $x_{i}$ and $x_{j}$ ($0 \leq i < j \leq p-1$) be two arbitrary vertices then by \eqref{eqn:vp1} and using \eqref{eqn:dij}, we have
\bean
\vp(x_{j}) - \vp(x_{i}) & = & (j-i)(d+1)-\displaystyle\sum_{t=i}^{j-1}(d(x_t,L_0)+d(x_{t+1},L_0)+k) \\
& \geq & d + 1 - d(x_{i},x_{j})
\eean
and hence $\vp$ is a radio labeling. The span of $\vp$ is given by
\bean
\Span(\vp) & = & \vp(x_{p-1}) - \vp(x_{0}) \\
& = & \sum_{i=0}^{p-2} (\vp(x_{i+1}) - \vp(x_{i})) \hspace{6.5cm}
\eean
\bean
& = & \sum_{i=0}^{p-2} (d+1-d(x_i,L_0)-d(x_{i+1},L_0)-k) \\
& = & (p-1)(d+1) - \sum_{i=0}^{p-2} (d(x_i,L_0)+d(x_{i+1},L_0)+k) \\
& = & (p-1)(d-k+1) - 2 L(G) + d(x_0,L_0) + d(x_{p-1},L_0) \\
& = & (p-1)(d-k+1)+\delta-2L(G).
\eean
Therefore, $\rn(G) \leq (p-1)(d-k+1) + \delta - 2 L(G)$. This together with \eqref{eqn:lb} implies \eqref{rn:main}.
\end{proof}

A graph with no cycle is called \emph{acyclic} graph. A \emph{forest} is an acyclic graph. A \emph{tree} is a connected acyclic graph. A \emph{spanning subgraph} of a graph $G$ is a subgraph with vertex set $V(G)$. Let $H$ be a connected proper induced subgraph of a graph $G$. A \emph{spanning subgraph rooted at $H$} of a graph $G$ is a subgraph $G_H$ of $G$ with vertex set $V(G_H) = V(G)$ and $G_H(V(H)) \cong H$. A \emph{spanning tree rooted at $H$} of a graph $G$, denoted by $T_H$, is a spanning subgraph rooted at $H$ of $G$ such that $T_H \setminus H$ is a forest. A spanning tree $T_H$ rooted at $H$ is called \emph{minimum distance spanning tree rooted at $H$} if $L(T_H) = L(G)$, denoted by $T_{H}^{m}$.

\begin{observation}\label{obs2} Let $G$ be a simple connected graph of order $p$, diameter $d \geq 2$ and $L_0$ is fixed in $G$ as described earlier. Let $T_{L_0}^{m}$ be a minimum distance spanning tree rooted at $L_0$ of $G$. Then
\begin{enumerate}[\rm (a)]
    \item $\diam(T_{L_0}^{m}) = \diam(G)$,
    \item $d_{T_{L_0}^{m}}(u,L_0) = d_{G}(u,L_0)$ for all $u \in V(T^m_{L_0})$,
    \item $L(T_{L_0}^{m}) = L(G)$,
    \item $d_{T_{L_0}^{m}}(u,v) \geq d_G(u,v)$ for all $u,v \in V(G)$.
\end{enumerate}
\end{observation}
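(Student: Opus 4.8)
The plan is to prove the four items in a dependency order, deriving each from the preceding ones together with the single structural input that $T_{L_0}^m$ is a spanning subgraph of $G$ with $L(T_{L_0}^m)=L(G)$. First I would record (d), which underlies everything else: since $T_{L_0}^m$ is a spanning subgraph of $G$ (same vertex set, a subset of the edges), every $u$--$v$ path in $T_{L_0}^m$ is also a path in $G$, so a shortest tree path gives $d_{T_{L_0}^m}(u,v)\geq d_G(u,v)$ for all $u,v\in V(G)$. Statement (c) is then immediate, being exactly the defining equality $L(T_{L_0}^m)=L(G)$ of a minimum distance spanning tree, so nothing is to prove.

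For (b) I would use a global counting (averaging) argument. Writing each layer sum as a sum of distances to the root set gives $L(G)=\sum_{u\in V(G)}d_G(u,L_0)$ and likewise $L(T_{L_0}^m)=\sum_{u\in V(G)}d_{T_{L_0}^m}(u,L_0)$, where in each graph the summand is the index of the layer containing $u$. By the set-distance version of (d), namely $d_{T_{L_0}^m}(u,L_0)\geq d_G(u,L_0)$ for every $u$ (a shortest tree path to $L_0$ is in particular a walk to $L_0$ in $G$), each summand of $L(T_{L_0}^m)$ dominates the corresponding summand of $L(G)$. Since the two totals are equal by (c), every one of these inequalities must be an equality, which is precisely (b).

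The crux is (a), and I expect its \emph{upper} bound to be the main obstacle; the lower bound $\diam(T_{L_0}^m)\geq\diam(G)$ is immediate from (d) applied to a diametral pair of $G$. For the reverse inequality I would fix arbitrary $u,v$ and analyse the unique tree path between them. Because $T_{L_0}^m(L_0)\cong H$ is connected, each vertex attaches to the core $L_0$ at a well-defined nearest root vertex, and routing the tree path through these attachment points yields $d_{T_{L_0}^m}(u,v)\leq d_{T_{L_0}^m}(u,L_0)+\diam_{T_{L_0}^m}(L_0)+d_{T_{L_0}^m}(v,L_0)$. Invoking (b) to replace the two root distances by $d_G(u,L_0)$ and $d_G(v,L_0)$, and identifying $\diam_{T_{L_0}^m}(L_0)$ with $k=\diam(L_0)$, this matches the right-hand side of \eqref{eqn:dist}. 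The delicate point — and where the special ordering hypothesis on $L_0$ must enter — is to show that this bound never exceeds $d$, so that the tree does not stretch any pair beyond the diameter of $G$; controlling $\diam_{T_{L_0}^m}(L_0)$ (so that the core of the tree is itself isometric and contributes exactly $k$) and closing this final estimate is the step I expect to require the most care.
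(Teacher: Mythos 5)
The paper states Observation~\ref{obs2} with no proof at all, so there is no argument of the author's to compare yours against; judged on its own, your treatment of parts (b), (c) and (d) is correct and complete. Part (c) is indeed just the defining property $L(T_{L_0}^{m})=L(G)$ of a minimum distance spanning tree, part (d) holds because $T_{L_0}^{m}$ is a spanning subgraph of $G$, and your averaging argument for (b) --- writing $L(G)=\sum_{u}d_G(u,L_0)$ and $L(T_{L_0}^{m})=\sum_{u}d_{T_{L_0}^{m}}(u,L_0)$, noting termwise domination and equal totals --- is clean and closes that part. The lower bound $\diam(T_{L_0}^{m})\geq\diam(G)$ in (a) also follows from (d) exactly as you say.

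The upper bound $\diam(T_{L_0}^{m})\leq\diam(G)$, which you explicitly leave open, is a genuine gap, and the route you sketch cannot close it: the estimate $d_{T_{L_0}^{m}}(u,v)\leq d_G(u,L_0)+k+d_G(v,L_0)$ is correct, but its right-hand side can be as large as $2h+k$, and the ordering hypothesis on $L_0$ does not force $2h+k\leq d$. In fact part (a) fails under the stated hypotheses alone. Take $G=C_4$ with vertices $c,a,x,b$ in cyclic order and $L_0=\{c\}$, so $k=0$, $h=2$ and $\diam(G)=2$; the ordering $(x,c,a,b)$ satisfies $d(x_i,x_{i+1})=d(x_i,L_0)+d(x_{i+1},L_0)+k$ throughout, so this $L_0$ is admissible, yet the spanning tree with edges $ca$, $cb$, $bx$ has $L(T)=4=L(G)$ (indeed every spanning tree of $C_4$ is a path) and diameter $3>2$. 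So the observation as stated is not provable; it needs a stronger hypothesis --- in the one place it is used, Theorem~\ref{thm:s1}, one additionally knows that $\rn(G)$ attains \eqref{eqn:lb}, and in the paper's applications $2h+k=d$ holds, which is exactly what your final estimate requires. Your instinct that this is the delicate step is sound, but neither you nor the paper supplies the missing argument, and no such argument exists without an added assumption.
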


\begin{theorem}\label{thm:s1} Let $G$ be a simple connected graph of order $p$, diameter $d \geq 2$ and $L_0$ is fixed in $G$ as described earlier. Denote $\diam(L_0) = k$. If $\rn(G)$ attains a lower bound given in \eqref{eqn:lb} then $\rn(T_{L_0}^{m})$ attains a lower bound given in \eqref{eqn:lb} and $\rn(T_{L_0}^{m}) = \rn(G)$.
\end{theorem}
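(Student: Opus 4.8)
The plan is to use Theorem \ref{thm:main2} in both directions, with the spanning tree $T := T_{L_0}^{m}$ inheriting every parameter of $G$ through Observation \ref{obs2}. First I would invoke the necessity half of Theorem \ref{thm:main2}: since $\rn(G)$ attains \eqref{eqn:lb}, there is an ordering $O(V(G)) = (x_0,x_1,\ldots,x_{p-1})$ of $V(G) = V(T)$ satisfying conditions (a) and (b) of that theorem, all distances being computed in $G$. The goal is to show that this \emph{same} ordering certifies the bound for $T$.

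Next I would check that $T$ carries the identical data feeding the formula $(p-1)(d-k+1)+\delta-2L(G)$. The order $p$ is the same; by Observation \ref{obs2}(a) the diameter is again $d$; since $|L_0|$ is unchanged, so is $\delta$; and by Observation \ref{obs2}(c) we have $L(T) = L(G)$. It remains to see that $\diam_{T}(L_0) = k$: because $T$ restricted to $L_0$ is the isomorphic copy of $H$ and $T$ is a tree, the unique $T$-path between any two vertices of $L_0$ stays inside $L_0$, so the pairwise distances among $L_0$-vertices in $T$ are exactly those of $H$, whence $\diam_{T}(L_0) = \diam(H) = k$. This is the one step that genuinely uses the tree structure rather than a direct appeal to Observation \ref{obs2}, and I expect it to be the main point needing care.

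I would then transfer conditions (a) and (b) of Theorem \ref{thm:main2} to $T$. Condition (a) passes verbatim because Observation \ref{obs2}(b) gives $d_{T}(x_0,L_0)+d_{T}(x_{p-1},L_0) = d_{G}(x_0,L_0)+d_{G}(x_{p-1},L_0)$, which is $1$ or $0$ according to $|L_0|$. For condition (b), the right-hand side of \eqref{eqn:dij} depends only on the quantities $d(x_t,L_0)$, $k$ and $d$, each unchanged in $T$ by the preceding paragraph, so that right-hand side is identical for $T$ and $G$; meanwhile Observation \ref{obs2}(d) gives $d_{T}(x_i,x_j) \ge d_{G}(x_i,x_j)$, which only enlarges the left-hand side. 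Hence the inequality \eqref{eqn:dij} that held in $G$ continues to hold in $T$.

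Finally, applying the sufficiency half of Theorem \ref{thm:main2} to $T$ shows that the labeling defined by \eqref{eqn:vp0} and \eqref{eqn:vp1} is optimal and yields $\rn(T) = (p-1)(d-k+1)+\delta-2L(T)$. Substituting $L(T) = L(G)$ gives $\rn(T_{L_0}^{m}) = (p-1)(d-k+1)+\delta-2L(G) = \rn(G)$, so $T_{L_0}^{m}$ attains its own lower bound \eqref{eqn:lb} and equals $\rn(G)$. Apart from the invariance of $k = \diam(L_0)$ under passage to the spanning tree, the entire argument is a direct transcription through Observation \ref{obs2}.
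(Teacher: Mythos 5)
Your proof is correct and follows essentially the same route as the paper's, which simply notes that the ordering certifying the bound for $G$ transfers verbatim to $T_{L_0}^{m}$ via Observation \ref{obs2}; you merely spell out the parameter checks the paper leaves implicit. One small caveat: $T_{L_0}^{m}$ need not be a tree (only $T_{L_0}^{m}\setminus H$ is required to be a forest, and $H$ may contain cycles), so your ``unique $T$-path'' justification of $\diam_{T_{L_0}^{m}}(L_0)=k$ should instead use the sandwich $d_G(u,v)\leq d_{T_{L_0}^{m}}(u,v)\leq d_H(u,v)$ together with $\diam_G(L_0)=\diam(H)=k$ --- the conclusion is unaffected.
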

\begin{proof} Since $\rn(G)$ attains a lower bound given in \eqref{eqn:lb}, there exists an ordering $O(V(G))$:=($x_0$, $x_1$, $\ldots$, $x_{p-1}$) of $V(G)$ which satisfies conditions (a)-(b) of Theorem \ref{thm:main2}. Then by Observation \ref{obs2}, the same ordering of $V(T_{L_0}^{m}) = V(G)$ satisfies conditions (a)-(b) of Theorem \ref{thm:main2}. Hence, $\rn(T_{L_0}^{m})$ attains a lower bound given in \eqref{eqn:lb}. Since $V(T_{L_0}^m) = V(G)$, $\diam(T_{L_0}^m) = \diam(G)$ and $L(T_{L_0}^m) = L(G)$, it is clear that $\rn(T_{L_0}^{m}) = \rn(G)$.
\end{proof}

\begin{theorem}\label{thm:s3} Let $G$ be a simple connected graph of order $p$, diameter $d \geq 2$ and $L_0$ is fixed in $G$ as described earlier. Denote $\diam(L_0) = k$. Let $G=G_1 \supseteq G_2 \supseteq \ldots \supseteq G_t = T_{L_0}^{m}$ be a sequence of subgraphs obtained by deleting edges in $G$ to obtain $T_{L_0}^{m}$. If $\rn(G)$ attains a lower bound given in \eqref{eqn:lb} then for $1 \leq i \leq t$, $\rn(G_i)$ attains a lower bound given in \eqref{eqn:lb} and $\rn(G_i) = \rn(G)$.
\end{theorem}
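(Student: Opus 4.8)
The plan is to exploit the fact that every graph in the chain is wedged between the tree $T_{L_0}^{m}$ and $G$, and then to transport down the chain the vertex ordering guaranteed by Theorem \ref{thm:main2}. Since the sequence terminates at $G_t = T_{L_0}^{m}$ and each $G_{i+1}$ is obtained from $G_i$ by deleting edges, every member is a spanning subgraph of $G$ satisfying $T_{L_0}^{m} \subseteq G_i \subseteq G$. Deleting edges can only increase distances, so for all $u,v \in V(G)$ one has
\[
d_G(u,v) \leq d_{G_i}(u,v) \leq d_{T_{L_0}^{m}}(u,v).
\]

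First I would show that the parameters entering the lower bound \eqref{eqn:lb} are identical for every $G_i$. Applying the sandwich to distances from $L_0$ and invoking Observation \ref{obs2}(b) gives $d_G(u,L_0) \leq d_{G_i}(u,L_0) \leq d_{T_{L_0}^{m}}(u,L_0) = d_G(u,L_0)$, whence $d_{G_i}(u,L_0) = d_G(u,L_0)$ for every $u$. Consequently the layers of $G_i$ and $G$ coincide, so $\delta(G_i) = \delta(G) = \delta$ and $L(G_i) = \dis\sum_{u \in V(G)} d_{G_i}(u,L_0) = \dis\sum_{u \in V(G)} d_G(u,L_0) = L(G)$. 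For the diameter, the same sandwich together with Observation \ref{obs2}(a) yields $d = \diam(G) \leq \diam(G_i) \leq \diam(T_{L_0}^{m}) = d$, so $\diam(G_i) = d \geq 2$. Hence the right-hand side of \eqref{eqn:lb}, namely $(p-1)(d-k+1)+\delta-2L(G)$, takes one and the same value for all $i$.

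Next I would transport the ordering. Since $\rn(G)$ attains \eqref{eqn:lb}, Theorem \ref{thm:main2} supplies an ordering $O(V(G)) = (x_0,x_1,\ldots,x_{p-1})$ satisfying conditions (a) and (b) for $G$. Condition (a) is phrased purely in terms of distances to $L_0$, which are unchanged, so it holds verbatim in each $G_i$. For condition (b), the right-hand side of \eqref{eqn:dij} depends only on the quantities $d(x_t,L_0)$, $k$ and $d$, all of which are preserved, while the left-hand side can only grow, since $d_{G_i}(x_i,x_j) \geq d_G(x_i,x_j)$. Thus \eqref{eqn:dij} continues to hold in $G_i$, and by Theorem \ref{thm:main2} (legitimate because $\diam(G_i) = d \geq 2$) the graph $G_i$ attains the lower bound \eqref{eqn:lb}. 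As that bound has the same value for every member of the chain, we conclude $\rn(G_i) = (p-1)(d-k+1)+\delta-2L(G) = \rn(G)$.

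The only delicate point is the preservation of the layer structure, i.e.\ the equality $d_{G_i}(u,L_0) = d_G(u,L_0)$; everything else follows mechanically once the distances to $L_0$, the diameter, and the total layer distance are shown to be invariant along the chain. This equality is exactly where the inclusion $T_{L_0}^{m} \subseteq G_i$ combined with the minimum-distance property of $T_{L_0}^{m}$ (Observation \ref{obs2}(b)) is essential, and it is the reason a generic spanning subgraph would not suffice --- only those containing a minimum distance spanning tree rooted at $L_0$ keep the relevant parameters frozen.
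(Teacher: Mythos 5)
Your proposal is correct and follows essentially the same route as the paper, which simply transports the ordering from Theorem \ref{thm:main2} down to the subgraphs using the fact that edge deletion preserves the relevant parameters while only increasing distances (the paper defers to the proof of Theorem \ref{thm:s1} via Observation \ref{obs2}). Your sandwich argument $d_G(u,v) \leq d_{G_i}(u,v) \leq d_{T_{L_0}^{m}}(u,v)$, showing that each intermediate $G_i$ inherits the frozen values of $d(\cdot,L_0)$, $\diam$, $\delta$ and $L(\cdot)$ from the inclusion $T_{L_0}^{m} \subseteq G_i \subseteq G$, is exactly the detail the paper leaves implicit.
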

\begin{proof} The proof is similar to the proof of Theorem \ref{thm:s1}.
\end{proof}

Let $G=(V(G),E(G))$ and $H=(V(H),E(H))$ be two graphs. The \emph{Cartesian product} of $G$ and $H$, denoted by $G \Box H$, is a graph with vertex set $V(G \Box H) = V(G) \times V(H)$, where two vertices $(g_1,h_1)$ and $(g_2,h_2)$ are adjacent if $g_1=g_2$ and $h_1h_2 \in E(H)$, or $h_1=h_2$ and $g_1g_2 \in E(G)$. A \emph{path} $P_m$ on $m$ vertices is a tree in which each vertex has degree at most 2. Denote the vertex set of $P_m$ by $V(P_m) = \{u_1,u_2,\ldots,u_m\}$ with $E(P_m) = \{u_iu_{i+1} : 1 \leq i \leq m-1\}$. A \emph{wheel graph} $W_n$ is a graph obtained by joining each vertex of a cycle $C_n$ to a new vertex $v_0$. Denote the vertex set of $W_n$ by $V(W_n) = \{v_0,v_1,\ldots,v_n\}$ with $E(W_n) = \{v_0v_i,v_0v_n,v_iv_{i+1},v_1v_n : 1 \leq i \leq n-1\}$. Observe that the diameter of $P_m \Box W_n$ is $m+1$.
\begin{theorem}\label{thm:pmwn} Let $m \geq 3$ and $n \geq 7$ be any integers. Then
\begin{equation}\label{pmwn:rn}
\rn(P_m \Box W_n) = \left\{
\begin{array}{ll}
\frac{1}{2}(m^2n+m^2+2m-2), & \mbox{if $m$ is even}; \\[0.2cm]
\frac{1}{2}(m^2n+m^2+2m+n-1), & \mbox{if $m$ is odd}.
\end{array}
\right.
\end{equation}
\end{theorem}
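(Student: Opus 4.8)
The plan is to apply Theorem \ref{thm:main2} with $L_0$ taken to be a single central vertex, pinning the claimed value from both sides.

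\textbf{Choice of $L_0$ and the lower bound.} Write each vertex of $P_m \Box W_n$ as $(u_s,v_t)$ with $1 \le s \le m$ and $0 \le t \le n$, so that $p = |V(G)| = m(n+1)$ and, since $\diam(P_m)=m-1$ and $\diam(W_n)=2$ for $n \ge 7$, we have $d = \diam(G) = m+1$. I would fix $c = \lceil m/2 \rceil$ and set $L_0 = \{(u_c,v_0)\}$, the hub of the central copy of $W_n$; then $k = \diam(L_0)=0$ and $\delta = 1$. Using $d\bigl((u_a,v_p),(u_b,v_q)\bigr) = |a-b| + d_{W_n}(v_p,v_q)$, the level of a vertex is $d((u_s,v_t),L_0) = |s-c|$ if $t=0$ and $|s-c|+1$ if $t \ge 1$, whence $L(G) = \sum_{w}d(w,L_0) = (n+1)\sum_{s=1}^m|s-c| + mn$. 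Since $\sum_{s}|s-c|$ equals $m^2/4$ when $m$ is even and $(m^2-1)/4$ when $m$ is odd, substituting $p,d,k,\delta,L(G)$ into \eqref{eqn:lb} gives exactly the right-hand side of \eqref{pmwn:rn} in each parity; centering on the hub (rather than a rim vertex or an off-center vertex) minimises $L(G)$ and hence maximises this bound, which is why this $L_0$ is the right choice. This establishes $\rn(P_m \Box W_n) \ge$ the claimed value.

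\textbf{The ordering and condition (a).} By Theorem \ref{thm:main2} it suffices to produce an ordering $(x_0,\ldots,x_{p-1})$ of $V(G)$ satisfying (a) and (b); the labeling \eqref{eqn:vp0}--\eqref{eqn:vp1} is then automatically optimal. I would build the ordering as a Liu--Zhu-type zig-zag in the path coordinate combined with a cyclic sweep in the wheel coordinate: list the vertices so that the path positions $s_0,s_1,\ldots$ alternate between the two sides of $c$ (so consecutive positions satisfy $|s_i-s_{i+1}| = |s_i-c|+|s_{i+1}-c|$, i.e.\ a geodesic passes through the central column), while the wheel component steps around the rim $v_1,\ldots,v_n$ by a fixed increment chosen, using $n \ge 7$, so that any two consecutive rim vertices are non-adjacent on $C_n$, and with the hub $v_0$ of each copy inserted so that its neighbours in the order are rim vertices. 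Taking $x_0 = (u_c,v_0) \in L_0$ and $x_{p-1}$ a vertex at level $1$ gives $d(x_0,L_0)+d(x_{p-1},L_0)=0+1=1$, so condition (a) holds. For consecutive pairs the construction forces $d(x_i,x_{i+1}) = d(x_i,L_0)+d(x_{i+1},L_0)$: the path part contributes $|s_i-c|+|s_{i+1}-c|$ because the positions lie on opposite sides of $c$, and the wheel part contributes $1$ (hub--rim) or $2$ (two non-adjacent rim vertices). By \eqref{eqn:dist} this is the maximum possible, so these pairs meet (b) with equality, and since the common value never exceeds $d=m+1$, every increment $d+1-d(x_i,x_{i+1})$ in \eqref{eqn:vp1} is positive and $\vp$ is strictly increasing.

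\textbf{Condition (b) for the remaining pairs.} For $j-i \ge 2$ one has $\vp(x_j)-\vp(x_i) = \sum_{t=i}^{j-1}\bigl(d+1-d(x_t,L_0)-d(x_{t+1},L_0)\bigr)$, a sum of $j-i$ terms each at least $1$, so any pair with $j-i \ge d+1$ satisfies (b) at once because $d+1-d(x_i,x_j) \le d+1 \le j-i$. The remaining pairs, those within $d$ steps in the order, require a genuine lower bound on $d(x_i,x_j)$: after rewriting, (b) becomes $d(x_i,x_j) \ge \sum_{t=i}^{j-1}\bigl(d(x_t,L_0)+d(x_{t+1},L_0)\bigr) - (j-i-1)(d+1)$, and one argues that over a short block of the zig-zag the path positions stay spread apart while the rim sweep keeps $d_{W_n}$ from collapsing, so that $d(x_i,x_j)$ is large enough to absorb this accumulated deficit.

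\textbf{Main obstacle and conclusion.} The delicate step is this last one: verifying (b) for pairs that are only a moderate number of steps apart in the ordering. Here one must track the exact zig-zag pattern and the rim increment and split into cases according to how $x_i$ and $x_j$ sit in the path and wheel coordinates; this is also where $n \ge 7$ is used, both to guarantee enough rim vertices for the non-adjacency condition and to make the distance estimates go through. Once (a) and (b) are confirmed, Theorem \ref{thm:main2} yields $\rn(P_m \Box W_n) \le$ the claimed value, which together with the lower bound completes the proof; Theorems \ref{thm:s1} and \ref{thm:s3} then give, as a bonus, that a minimum distance spanning tree rooted at $L_0$ of $P_m \Box W_n$ shares the same radio number.
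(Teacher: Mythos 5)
Your overall strategy coincides with the paper's: compute the lower bound from Theorem \ref{thm:lb} and then exhibit an ordering satisfying conditions (a) and (b) of Theorem \ref{thm:main2}. Your lower-bound computation is correct in both parities (your $L(G)$ for odd $m$ agrees with \eqref{pmwn2:l}, and for even $m$ your single-vertex choice happens to yield the same numerical bound as the paper's two-vertex choice). The genuine gap is that the upper bound is never established: you describe a ``Liu--Zhu-type zig-zag combined with a cyclic sweep,'' but you neither write down the ordering nor verify \eqref{eqn:dij}, and you yourself flag the verification for pairs at moderate ordering-distance as ``the delicate step.'' That step is the entire content of the proof --- the paper devotes explicit permutations $\tau,\sigma,\alpha$, an explicit index formula for $x_t$, and a block-by-block case analysis (Claims 1 and 2) to exactly this --- so as written the proposal only proves the inequality $\rn(P_m\Box W_n)\ge$ the claimed value.

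A second, more structural concern: for even $m$ you take $L_0=\{(u_{m/2},v_0)\}$ with $k=0$, whereas the paper takes $L_0=\{(u_{m/2},v_0),(u_{m/2+1},v_0)\}$ with $k=1$. With your choice, every consecutive pair must satisfy $d(x_i,x_{i+1})=d(x_i,L_0)+d(x_{i+1},L_0)$, which forces the path-coordinates of $x_i$ and $x_{i+1}$ to lie weakly on opposite sides of column $m/2$; since exactly $(m/2)(n+1)=p/2$ vertices lie strictly to the right of that column, they must occupy exactly every other position of the ordering, with the remaining $p/2$ left-or-central vertices interleaved. Feeding this into \eqref{eqn:dij} for $j=i+2$ shows, for instance, that two rim vertices of column $m$ (level $m/2+1$) at ordering-distance $2$ force the vertex between them to have level at most $1$, and only $n+2$ vertices on the left/central side have level $\le 1$; analogous constraints accumulate for $j-i=4$ and beyond. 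I am not asserting this is impossible, but it is precisely the obstruction your sketch defers, and the paper's balanced two-vertex $L_0$ (which splits the columns into two equal halves) is what makes its explicit construction verifiable. You should either adopt the paper's $L_0$ for even $m$ or supply the explicit ordering and full case analysis for yours; note also that the lower bound from Theorem \ref{thm:lb} holds for any $L_0$, so you are free to use different sets for the two directions if that helps.
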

\begin{proof} We consider the following two cases.

\textsf{Case-1:} $m$ is even.

In this case, set $\{(u_{m/2},v_0),(u_{m/2+1},v_0)\}$ of $P_m \Box W_n$ as $L_0$ then $\diam(L_0) = k = 1$ and the maximum level in $P_m \Box W_n$ is $h = m/2$.

The order of $P_m \Box W_n$ and $L(P_m \Box W_n)$ are given by
\begin{equation}\label{pmwn1:p}
p := m(n+1),
\end{equation}
\begin{equation}\label{pmwn1:l}
L(P_m \Box W_n) := \frac{m}{4}(mn+2n+m-2).
\end{equation}
Substituting \eqref{pmwn1:p} and \eqref{pmwn1:l} into \eqref{eqn:lb} we obtain the right-hand side of \eqref{pmwn:rn} which is a lower bound for the radio number of $\rn(P_m \Box W_n)$. We prove that this lower bound is tight. For this purpose, we give an ordering $O(V(P_m \Box W_n)) := (x_0,x_1,\ldots,x_{p-1})$ of $V(P_m \Box W_n)$ which satisfies conditions (a) and (b) of Theorem \ref{thm:main2}.

Let $\tau$ and $\sigma$ be two permutation defined on $\{1,2,...,n\}$ as follows:
\begin{equation*}
\tau(j) = \left\{
\begin{array}{ll}
n-1, & \mbox{if $j = 1$}; \\
n, & \mbox{if $j = 2$}; \\
j-2, & \mbox{if $3 \leq j \leq n$}.
\end{array}
\right.
\end{equation*}
\begin{equation*}
\sigma(j) = \left\{
\begin{array}{ll}
\lceil j/4 \rceil, & \mbox{if $j \equiv 1$ (mod $4$)}; \\
\sum_{t=0}^{k-2}\lceil (n-t)/4 \rceil+\lceil j/4 \rceil, & \mbox{if $j \equiv k$ (mod $4$),\;$k=2,3,4$}.
\end{array}
\right.
\end{equation*}
Using these two permutations we first rename $(u_i,v_j)(1 \leq i \leq m, 0 \leq j \leq n)$ as $(a_r,b_s)$ as follows.
\begin{equation*}
(a_r,b_s) = \left\{
\begin{array}{ll}
(u_i,v_j), & \mbox{if $1 \leq i \leq m$ and $j = 0$}; \\
(u_i,v_{\sigma\tau(j)}), & \mbox{if $1 \leq i \leq m/2$ and $1 \leq j \leq n$}; \\
(u_i,v_{\sigma(j)}), & \mbox{if $m/2 < i \leq m$ and $1 \leq j \leq n$}.
\end{array}
\right.
\end{equation*}
We now define an ordering $O(V(P_m \Box W_n)):=(x_0,x_1,\ldots,x_{p-1})$ as follows: Let $x_t:=(a_r,b_s)$, where
\begin{equation*}
t := \left\{
\begin{array}{ll}
2(m/2-r)(n+1)+2s, & \mbox{if $1 \leq r \leq m/2$ and $1 \leq s \leq n$}; \\
2(m-r)(n+1)+2s-1, & \mbox{if $m/2 < r \leq m$ and $1 \leq s \leq n$}; \\
2(m/2-r)(n+1), & \mbox{if $1 \leq r \leq m/2$ and $s = 0$}; \\
2(m-r+1)(n+1)-1, & \mbox{if $m/2 < r \leq m$ and $s = 0$}.
\end{array}
\right.
\end{equation*}

Then note that $d(x_0,L_0)+d(x_{p-1},L_0)=0$. Hence, the condition (a) in Theorem \ref{thm:main2} is satisfied.

\textsf{Claim-1:} The above defined ordering $O(V(P_m \Box W_n)):=(x_0,x_1,\ldots,x_{p-1})$ satisfies \eqref{eqn:dij}.

Let $x_i$ and $x_j$ $(0 \leq i < j \leq p-1)$ be any two arbitrary vertices. Denote the right-hand side of \eqref{eqn:dij} by $E(i,j)$. Let $O(V(P_m \Box W_n)):=(x_0,x_1,\ldots,x_{p-1}) = U_0 \cup U_1 \cup \ldots \cup U_{m/2-1}$, where $U_t:=(x_{2t(n+1)},x_{(2t(n+1)+1)},\ldots,x_{2t(n+1)+2n+1})$ for $0 \leq t \leq m/2-1$. It is clear that $d(x_i,L_0)+d(x_{i+1},L_0) \leq (d+1)/2$ for all $0 \leq i \leq p-2$. Now if $x_i \in U_a, x_j \in U_b$. If $b > a+1$ then $E(i,j) < 0 < d(x_i,x_j)$. If $b=a+1$ then we consider the following two cases: (i) $j=i+2n$ and (ii) $j \neq i+2n$. If $j = i+2n$ then $d(x_i,x_j)=1$ and in this case, $E(i,j) < 0 < d(x_i,x_j)$ and if $j \neq i+2n$ then $d(x_i,x_j)=2$ and in this case, $E(i,j) \leq 2 \leq d(x_i,x_j)$. If $x_i,x_j \in U_a$ then if $x_i = x_{2t(n+1)}$ or $x_j=x_{2t(n+1)+2n+1}$ ($0 \leq t \leq m/2-1$) then $E(i,j) \leq 1 \leq d(x_i,x_j)$. If $x_{2t(n+1)} < x_i < x_j < x_{2t(n+1)+2n+1}$ then if $d(x_i,x_j)=1$ then $j-i \geq \lceil n/2 \rceil+1 \geq 3$ and hence $E(i,j) < 0 \leq d(x_i,x_j)$ and $d(x_i,x_j) \geq 2$ then $E(i,j) \leq 2 \leq d(x_i,x_j)$ which completes the proof of Claim-1.

\textsf{Case-2:} $m$ is odd.

In this case, set $\{(u_{(m+1)/2},v_0)\}$ of $P_m \Box W_n$ as $L_0$ then $\diam(L_0) = k = 0$ and the maximum level in $P_m \Box W_n$ is $h = (m+1)/2$. The order of $P_m \Box W_n$ and $L(P_m \Box W_n)$ are given by
\begin{equation}\label{pmwn2:p}
p:=m(n+1)
\end{equation}
\begin{equation}\label{pmwn2:l}
L(P_m \Box W_n) := \frac{1}{4}(m^2n+m^2+4mn-n-1).
\end{equation}
Substituting \eqref{pmwn2:p} and \eqref{pmwn2:l} into \eqref{eqn:lb} we obtain the right-hand side of \eqref{pmwn:rn} which is a lower bound for the radio number of $\rn(P_m \Box W_n)$. We prove that this lower bound is tight. For this purpose, we give an ordering $O(V(P_m \Box W_n)):=(x_0,x_1,\ldots,x_{p-1})$ of $V(P_m \Box W_n)$ which satisfies conditions (a) and (b) of Theorem \ref{thm:main2}.

Let $\tau$ and $\sigma$ are as defined earlier in Case-1. Let $\alpha$ be a permutation defined on $\{1,2,\ldots,n\}$ as follows:
\begin{equation*}
\alpha(j) = \left\{
\begin{array}{ll}
n-3, & \mbox{if $j = 1$}; \\
n-2, & \mbox{if $j = 2$}; \\
n-1, & \mbox{if $j = 3$}; \\
n, & \mbox{if $j = 4$}; \\
j-4, & \mbox{if $5 \leq j \leq n$}.
\end{array}
\right.
\end{equation*}
Using permutations $\alpha$, $\tau$ and $\sigma$, we first rename $(u_i,v_j)(1 \leq i \leq m, 0 \leq j \leq n)$ as $(a_r,b_s)$ as follows:
\begin{equation*}
(a_r,b_s) = \left\{
\begin{array}{ll}
(u_i,v_j), & \mbox{if $1 \leq i \leq m$ and $j = 0$; or $i=m$ and $1 \leq j \leq n$}; \\
(u_i,v_{\tau(j}), & \mbox{if $i = 1$ and $1 \leq j \leq n$}; \\
(u_i,v_{\alpha(j}), & \mbox{if $i = (m+1)/2$ and $1 \leq j \leq n$}; \\
(u_i,v_{\sigma\tau(j)}), & \mbox{if $2 \leq i \leq (m-1)/2$ and $1 \leq j \leq n$}; \\
(u_i,v_{\sigma(j)}), & \mbox{if $(m+3)/2 \leq i \leq m-1$ and $1 \leq j \leq n$}.
\end{array}
\right.
\end{equation*}
We now define an ordering $O(V(P_m \Box W_n)):=(x_0,x_1,\ldots,x_{p-1})$ as follows: Let $x_t:=(a_r,b_s)$, where
\begin{equation*}
t := \left\{
\begin{array}{ll}
3s-1, & \mbox{if $r = 1$ and $1 \leq s \leq n$}; \\
3s, & \mbox{if $r = (m+1)/2$ and $1 \leq s \leq n$}; \\
3s-2, & \mbox{if $r = m$ and $1 \leq s \leq n$}; \\
3n+2, & \mbox{if $r = 1$ and $s = 0$}; \\
0, & \mbox{if $r = (m+1)/2$ and $s = 0$}; \\
3n+1, & \mbox{if $r = m$ and $s = 0$}; \\
3n+2+2(r-2)(n+1)+2s, & \mbox{if $1 < r < (m+1)/2$ and $1 \leq s \leq n$}; \\
3n+2+2(r-(m+1)/2-1)(n+1)+2s-1, & \mbox{if $(m+1)/2 < r < m$ and $1 \leq s \leq n$}; \\
3n+2+2(r-1)(n+1), & \mbox{if $1 < r < (m+1)/2$ and $s = 0$}; \\
3n+2+2(r-(m+1)/2)(n+1)-1, & \mbox{if $(m+1)/2 < r < m$ and $s = 0$}.
\end{array}
\right.
\end{equation*}

Then note that $d(x_0,L_0)+d(x_{p-1},L_0) = 1$. Hence, the condition (a) in Theorem \ref{thm:main2} is satisfied.

\textsf{Claim-2:} The above defined ordering $O(V(P_m \Box W_n)):=(x_0,x_1,\ldots,x_{p-1})$ satisfies \eqref{eqn:dij}.

Let $x_i$ and $x_j$ $(0 \leq i < j \leq p-1)$ be any two arbitrary vertices. Denote the right-hand side of \eqref{eqn:dij} by $E(i,j)$. Let $O(P_m \Box W_n) := U_1 \cup \ldots \cup U_{(m-1)/2}$, where $U_1:=(x_0,x_1,\ldots,x_{3(n+1)-1})$ and $U_{t+2}:=(x_{2t(n+1)+3(n+1)},\ldots,x_{2t(n+1)+5(n+1)-1})$ for $0 \leq t \leq (m-5)/2$. Let $x_i \in U_a$ and $x_j \in U_b$. Assume $a = b = 1$. In this case, if $j \geq i+3$ then $E(i,j) \leq 0 < d(x_i,x_j)$. If $j = i+2$ then note that $d(x_i,x_j) \geq (d+2)/2$ and hence $E(i,j) \leq d/2 \leq d(x_i,x_j)$. Let $a = 1$ and $b > 1$. If $j \geq i+3$ then $E(i,j) \leq 0 < d(x_i,x_j)$. If $j = i+2$ then if $x_i = x_{3(n+1)-2}$ then note that $d(x_i,x_j) = (d-1)/2$ and $E(i,j) = (d-4)/2 < d(x_i,x_j)$. If $x_i = x_{3(n+1)-1}$ then note that $d(x_i,x_j) = 2$ and $E(i,j) = 1 < d(x_i,x_j)$. Let $a, b \geq 2$. If $b > a+1$ then $E(i,j) < 0 < d(x_i,x_j)$. If $b = a+1$ then we consider the following two cases: (i) $j = i+2n$ and (ii) $j \neq i+2n$. If $j = i+2n$ then $d(x_i,x_j) = 1$ and in this case, $E(i,j) < 0 < d(x_i,x_j)$ and if $j \neq i+2n$ then $d(x_i,x_j) = 2$ and in this case, $E(i,j) \leq 2 \leq d(x_i,x_j)$. If $x_i,x_j \in U_a$. If $x_j = x_{2t(n+1)+5(n+1)-2}$ or $x_j = x_{2t(n+1)+5(n+1)-1}$ ($0 \leq t \leq (m-5)/2$) then $E(i,j) \leq 1 \leq d(x_i,x_j)$; otherwise $E(i,j) \leq 2 \leq d(x_i,x_j)$ which completes the proof of Claim-2.
\end{proof}

An \emph{$n$-star}, denoted by $K_{1,n}$, is a tree consisting of $n$ leaves and another vertex joined to all leaves by edges. Denote the vertex set of $K_{1,n}$ by $V(K_{1,n}) = \{v_0,v_1,\ldots,v_n\}$ with $E(K_{1,n}) = \{v_0v_i : 1 \leq i \leq n\}$. A \emph{friendship graph} $F_n$ is a graph obtained by identifying one vertex of $n$ copies of cycle $C_3$ with a common vertex. Denote the vertex set of $F_n$ by $V(F_n) = \{v_0,v_1,\ldots,v_{2n}\}$ with $E(F_n) = \{v_0v_i, v_0v_{n+i},  v_{2i-1}v_{2i} : 1 \leq i \leq n\}$.

\begin{corollary}\label{thm:pmk1n} Let $m \geq 3$ and $n \geq 7$ be any integers. Then
\begin{equation*}\label{pmk1n:rn}
\rn(P_m \Box K_{1,n}) = \left\{
\begin{array}{ll}
\frac{1}{2}(m^2n+m^2+2m-2), & \mbox{if $m$ is even}; \\[0.2cm]
\frac{1}{2}(m^2n+m^2+2m+n-1), & \mbox{if $m$ is odd}.
\end{array}
\right.
\end{equation*}
\end{corollary}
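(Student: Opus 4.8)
The plan is to realize $P_m \Box K_{1,n}$ as an intermediate graph in an edge-deletion chain descending from $P_m \Box W_n$ to a minimum distance spanning tree rooted at $L_0$, and then to invoke Theorem~\ref{thm:s3}. The starting observation is that $K_{1,n}$ is precisely the spanning subgraph of $W_n$ obtained by deleting all the rim edges $\{v_iv_{i+1}, v_1v_n : 1 \le i \le n-1\}$, so that $P_m \Box K_{1,n}$ is the spanning subgraph of $P_m \Box W_n$ obtained by deleting the corresponding ``rim'' edges $\{(u_i,v_j)(u_i,v_{j+1}),(u_i,v_1)(u_i,v_n)\}$. In particular $V(P_m \Box K_{1,n}) = V(P_m \Box W_n)$ and $P_m \Box K_{1,n} \subseteq P_m \Box W_n$.

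First I would check that the two products share all the parameters appearing in \eqref{eqn:lb}. Both have order $p = m(n+1)$. Using $d_{G \Box H}((g_1,h_1),(g_2,h_2)) = d_G(g_1,g_2)+d_H(h_1,h_2)$ together with the fact that $d_{W_n}(v_j,v_0) = d_{K_{1,n}}(v_j,v_0) = 1$ for every rim vertex $v_j$, the distance of any vertex to the (identically chosen) set $L_0$ is unchanged by the deletion, so the layer decomposition is identical and $L(P_m \Box K_{1,n}) = L(P_m \Box W_n)$. Since $\diam(K_{1,n}) = \diam(W_n) = 2$ (two rim vertices are at distance $2$ through the centre in both graphs), removing the rim edges does not increase the diameter, and $\diam(P_m \Box K_{1,n}) = m+1$ as well. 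Hence $d$, $k = \diam(L_0)$ and $\delta$ coincide, and the right-hand side of \eqref{eqn:lb} takes exactly the value claimed for $\rn(P_m \Box K_{1,n})$ in the corollary.

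The key step is to exhibit a minimum distance spanning tree $T_{L_0}^m$ of $P_m \Box W_n$ rooted at $L_0$ that already lies inside $P_m \Box K_{1,n}$. I would take the ``spine'' consisting of the path-direction edges $(u_i,v_0)(u_{i+1},v_0)$ in the $v_0$-fibre together with all star edges $(u_i,v_0)(u_i,v_j)$ for $j \ge 1$. This is a spanning tree rooted at $L_0$, it uses no rim edges, and along it every vertex $(u_i,v_j)$ is joined to $L_0$ by a geodesic, so $d_{T_{L_0}^m}(\cdot,L_0) = d_{P_m \Box W_n}(\cdot,L_0)$ and therefore $L(T_{L_0}^m) = L(P_m \Box W_n)$; by Observation~\ref{obs2} this makes $T_{L_0}^m$ a minimum distance spanning tree rooted at $L_0$, and by construction $T_{L_0}^m \subseteq P_m \Box K_{1,n} \subseteq P_m \Box W_n$.

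With this in hand the proof concludes quickly. Ordering the edge deletions so that the rim edges are removed first (reaching $P_m \Box K_{1,n}$) and the remaining superfluous edges afterwards (reaching $T_{L_0}^m$) produces a chain $P_m \Box W_n = G_1 \supseteq \cdots \supseteq P_m \Box K_{1,n} \supseteq \cdots \supseteq G_t = T_{L_0}^m$ of the type required by Theorem~\ref{thm:s3}. Since Theorem~\ref{thm:pmwn} guarantees that $\rn(P_m \Box W_n)$ attains the lower bound \eqref{eqn:lb}, Theorem~\ref{thm:s3} then yields that $\rn(P_m \Box K_{1,n})$ attains \eqref{eqn:lb} too and that $\rn(P_m \Box K_{1,n}) = \rn(P_m \Box W_n)$, which is the stated formula. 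The main thing to get right is the verification in the third step that the spine-plus-stars tree is genuinely a minimum distance spanning tree rooted at $L_0$ contained in $P_m \Box K_{1,n}$ --- that is, that deleting the rim edges neither increases any distance to $L_0$ nor the diameter; everything else is a direct application of the already-established machinery.
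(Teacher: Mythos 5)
Your proposal is correct and takes essentially the same route as the paper: the paper's own proof is a one-line observation that $P_m \Box K_{1,n}$ sits inside $P_m \Box W_n$ with the same $L_0$ and then invokes Theorem~\ref{thm:s3}. You have simply filled in the details the paper leaves implicit --- the preservation of $p$, $d$, $k$, $\delta$ and $L(\cdot)$ under the rim-edge deletions, and the explicit spine-plus-stars minimum distance spanning tree witnessing that $P_m \Box K_{1,n}$ is a valid intermediate graph in the required edge-deletion chain.
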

\begin{proof} Observe that $P_m \Box K_{1,n}$ can be regarded as a subgraph of $P_m \Box W_n$ with identical $L_0 = \{(u_{m/2},v_0),(u_{m/2+1},v_0)\}$ when $m$ is even and $L_0 = \{(u_{(m+1)/2},v_0)\}$ when $m$ is odd and hence by Theorem \ref{thm:s3}, the radio number of $P_m \Box W_n$ and $P_m \Box K_{1,n}$ are identical.
\end{proof}

\begin{corollary}\label{thm:Fn} Let $m \geq 3$ and $n \geq 4$ be any integers. Then
\begin{equation*}\label{pmFn:rn}
\rn(P_m \Box F_{n}) = \left\{
\begin{array}{ll}
\frac{1}{2}(2m^2n+m^2+2m-2), & \mbox{if $m$ is even}; \\[0.2cm]
\frac{1}{2}(2m^2n+m^2+2m+2n-1), & \mbox{if $m$ is odd}.
\end{array}
\right.
\end{equation*}
\end{corollary}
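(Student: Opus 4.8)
The plan is to realize $P_m \Box F_n$ as a spanning subgraph of $P_m \Box W_{2n}$ and then invoke Theorem \ref{thm:s3}, exactly as was done for the star in Corollary \ref{thm:pmk1n}. The starting observation is that the friendship graph $F_n$ is a spanning subgraph of the wheel $W_{2n}$: both have hub $v_0$ adjacent to all $2n$ rim vertices, and the $n$ triangle edges $v_{2i-1}v_{2i}$ of $F_n$ are precisely the alternate cycle edges of $W_{2n}$, so $F_n$ arises from $W_{2n}$ by deleting the remaining $n$ rim edges. Consequently $P_m \Box F_n$ is obtained from $P_m \Box W_{2n}$ by deleting $mn$ edges, and in particular $|V(P_m \Box F_n)| = |V(P_m \Box W_{2n})| = m(2n+1)$. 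This is the exact analogue of the inclusion $K_{1,n} \subseteq W_n$ exploited in Corollary \ref{thm:pmk1n}, except that now the matching wheel has rim size $2n$ rather than $n$.

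Next I would fix $L_0$ in $P_m \Box F_n$ to be the same set used for the wheel of rim size $2n$ in Theorem \ref{thm:pmwn}, namely $\{(u_{m/2},v_0),(u_{m/2+1},v_0)\}$ when $m$ is even and $\{(u_{(m+1)/2},v_0)\}$ when $m$ is odd. Because every rim vertex of $F_n$ is at distance $1$ from $v_0$, exactly as in $W_{2n}$, the distance of each vertex to $L_0$, the diameter $d = m+1$, the layer sizes $|L_i|$, and hence both $L(P_m \Box F_n)$ and $k = \diam(L_0)$, all agree with the corresponding data for $P_m \Box W_{2n}$. Therefore the right-hand side of \eqref{eqn:lb} takes the same value for the two graphs, and substituting the rim size $2n$ into the formula of Theorem \ref{thm:pmwn} already reproduces the two expressions claimed in the statement.

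The one point requiring justification is that Theorem \ref{thm:s3} applies, i.e. that $P_m \Box F_n$ lies in a chain of edge deletions from $P_m \Box W_{2n}$ down to some minimum distance spanning tree $T_{L_0}^{m}$. For this I would exhibit a minimum distance spanning tree of $P_m \Box W_{2n}$ rooted at $L_0$ that uses only spoke edges $(u_i,v_0)(u_i,v_j)$ and path edges $(u_i,v_j)(u_{i+1},v_j)$: joining each vertex $(u_i,v_j)$ with $j \neq 0$ to $(u_i,v_0)$ and then following the path factor toward $L_0$ realizes $d_T((u_i,v_j),L_0) = d_{P_m \Box W_{2n}}((u_i,v_j),L_0)$ for every vertex, so by Observation \ref{obs2} this tree satisfies $L(T) = L(P_m \Box W_{2n})$ and is a minimum distance spanning tree. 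Since none of these spoke or path edges is among the $mn$ deleted rim edges, $T_{L_0}^{m} \subseteq P_m \Box F_n \subseteq P_m \Box W_{2n}$, placing $P_m \Box F_n$ in the required deletion chain. Finally, Theorem \ref{thm:pmwn} applies to $P_m \Box W_{2n}$ because $n \geq 4$ forces the rim size $2n \geq 8 \geq 7$ (this is precisely where the hypothesis $n \geq 4$ is used), so $\rn(P_m \Box W_{2n})$ attains \eqref{eqn:lb}; Theorem \ref{thm:s3} then forces $\rn(P_m \Box F_n) = \rn(P_m \Box W_{2n})$, and the substitution of the rim size $2n$ completes the proof. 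The main obstacle is thus confined to verifying that the deleted rim edges are dispensable for a minimum distance spanning tree, which the spoke-plus-path construction settles.
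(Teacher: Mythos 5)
Your proposal is correct and follows essentially the same route as the paper: realize $P_m \Box F_n$ as a spanning subgraph of $P_m \Box W_{2n}$ with the same $L_0$, invoke Theorem \ref{thm:s3}, and substitute rim size $2n$ into Theorem \ref{thm:pmwn}. The only difference is that you explicitly verify the hypothesis of Theorem \ref{thm:s3} by exhibiting a spoke-plus-path minimum distance spanning tree avoiding the deleted rim edges, a detail the paper leaves implicit.
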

\begin{proof} Observe that $P_m \Box F_n$ can be regarded as a subgraph of $P_m \Box W_{2n}$ with identical $L_0 = \{(u_{m/2},v_0),(u_{m/2+1},v_0)\}$ when $m$ is even and $L_0 = \{(u_{(m+1)/2},v_0)\}$ when $m$ is odd and hence by Theorem \ref{thm:s3}, the radio number of $P_m \Box W_{2n}$ and $P_m \Box F_n$ are identical.
\end{proof}

\begin{example}{\rm In Table 1, an ordering of vertices and the corresponding optimal radio labeling of $P_{7} \Box W_7$ is shown.}
\end{example}
\begin{table}[h!]\label{table2}
\centering
\caption{An ordering and optimal radio labeling for vertices of $P_{7} \Box W_7$.}
\begin{tabular}{|c|ll|ll|ll|ll|ll|ll|ll|}
\hline
$(u_{i},v_{j}) \frac{i \rightarrow}{j \downarrow}$ &  \multicolumn{2}{|c|}{1} & \multicolumn{2}{|c|}{2} &  \multicolumn{2}{|c|}{3} & \multicolumn{2}{|c|}{4} &  \multicolumn{2}{|c|}{5} & \multicolumn{2}{|c|}{6} & \multicolumn{2}{|c|}{7} \\ \hline\hline
0 & $x_{23}$ & 72 & $x_{39}$ & 139 & $x_{55}$ & 206 & \underline{\textbf{$x_{0}$}} & \underline{\textbf{\textbf{0}}} & $x_{38}$ & 133 & $x_{54}$ & 200 & $x_{22}$ & 69 \\
1 & $x_{17}$ & 51 & $x_{31}$ & 104 & $x_{47}$ & 171 & $x_{12}$ & 37 & $x_{24}$ & 76 & $x_{40}$ & 143 & $x_{1}$ & 5 \\
2 & $x_{20}$ & 60 & $x_{35}$ & 120 & $x_{51}$ & 187 & $x_{15}$ & 46 & $x_{28}$ & 92 & $x_{44}$ & 159 & $x_{4}$ & 14 \\
3 & $x_{2}$ & 6 & $x_{25}$ & 80 & $x_{41}$ & 147 & $x_{18}$ & 55 & $x_{32}$ & 108 & $x_{48}$ & 175 & $x_{7}$ & 23 \\
4 & $x_{5}$ & 15 & $x_{29}$ & 96 & $x_{45}$ & 163 & $x_{21}$ & 64 & $x_{36}$ & 124 & $x_{52}$ & 191 & $x_{10}$ & 32 \\
5 & $x_{8}$ & 24 & $x_{33}$ & 112 & $x_{49}$ & 179 & $x_{3}$ & 10 & $x_{26}$ & 84 & $x_{42}$ & 151 & $x_{13}$ & 41 \\
6 & $x_{11}$ & 33 & $x_{37}$ & 128 & $x_{53}$ & 195 & $x_{6}$ & 19 & $x_{30}$ & 100 & $x_{46}$ & 167 & $x_{16}$ & 50 \\
7 & $x_{14}$ & 42 & $x_{27}$ & 88 & $x_{43}$ & 155 & $x_{9}$ & 28 & $x_{34}$ & 116 & $x_{50}$ & 183 & $x_{19}$ & 59 \\
\hline
\end{tabular}
\end{table}

\begin{example}{\rm In Table 2, an ordering of vertices and the corresponding optimal radio labeling of $P_{8} \Box W_7$ is shown.}
\end{example}
\begin{table}[h!]\label{table3}
\centering
\caption{An ordering and optimal radio labeling for vertices of $P_{8} \Box W_7$.}
\small{
\begin{tabular}{|c|ll|ll|ll|ll|ll|ll|ll|ll|}
\hline
$(u_{i},v_{j}) \frac{i \rightarrow}{j \downarrow}$ &  \multicolumn{2}{|c|}{1} & \multicolumn{2}{|c|}{2} &  \multicolumn{2}{|c|}{3} & \multicolumn{2}{|c|}{4} &  \multicolumn{2}{|c|}{5} & \multicolumn{2}{|c|}{6} & \multicolumn{2}{|c|}{7} &\multicolumn{2}{|c|}{8} \\ \hline\hline
0 & $x_{48}$ & 201 & $x_{32}$ & 134 & $x_{16}$ & 67 & \underline{\textbf{$x_{0}$}} & \underline{\textbf{\textbf{0}}} & $x_{63}$ & 263 & $x_{47}$ & 196 & $x_{31}$ & 129 & $x_{15}$ & 62 \\
1 & $x_{56}$ & 234 & $x_{40}$ & 167 & $x_{24}$ & 100 & $x_{8}$ & 33 & $x_{49}$ & 206 & $x_{33}$ & 139 & $x_{17}$ & 72 & $x_{1}$ & 5 \\
2 & $x_{60}$ & 250 & $x_{44}$ & 183 & $x_{28}$ & 116 & $x_{12}$ & 49 & $x_{53}$ & 222 & $x_{37}$ & 155 & $x_{21}$ & 88 & $x_{5}$ & 21 \\
3 & $x_{50}$ & 210 & $x_{34}$ & 143 & $x_{18}$ & 76 & $x_{2}$ & 9 & $x_{57}$ & 238 & $x_{41}$ & 171 & $x_{25}$ & 104 & $x_{9}$ & 37 \\
4 & $x_{54}$ & 226 & $x_{38}$ & 159 & $x_{22}$ & 92 & $x_{6}$ & 25 & $x_{61}$ & 254 & $x_{45}$ & 187 & $x_{29}$ & 120 & $x_{13}$ & 53 \\
5 & $x_{58}$ & 242 & $x_{42}$ & 175 & $x_{26}$ & 108 & $x_{10}$ & 41 & $x_{51}$ & 214 & $x_{35}$ & 147 & $x_{19}$ & 80 & $x_{3}$ & 13 \\
6 & $x_{62}$ & 258 & $x_{46}$ & 191 & $x_{30}$ & 124 & $x_{14}$ & 57 & $x_{55}$ & 230 & $x_{39}$ & 163 & $x_{23}$ & 96 & $x_{7}$ & 29 \\
7 & $x_{52}$ & 218 & $x_{36}$ & 151 & $x_{20}$ & 84 & $x_{4}$ & 17 & $x_{59}$ & 246 & $x_{43}$ & 179 & $x_{27}$ & 112 & $x_{11}$ & 45 \\
\hline
\end{tabular}}
\end{table}

\section{Concluding Remarks}
In \cite{Kim}, Kim \emph{et al.} determined the radio number of Cartesian product of paths and complete graph $P_m \Box K_n$ as follows:
\begin{theorem}\cite{Kim}\label{pmkn:thm} Let $m \geq 4$ and $n \geq 3$ be integers. Then
\begin{equation}\label{pmkn:rn}
\rn(P_m \Box K_{n}) = \left\{
\begin{array}{ll}
\frac{1}{2}(m^2n-2m+2), & \mbox{if $m$ is even}; \\[0.2cm]
\frac{1}{2}(m^2n-2m+n+2), & \mbox{if $m$ is odd}.
\end{array}
\right.
\end{equation}
\end{theorem}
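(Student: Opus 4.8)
The plan is to prove \eqref{pmkn:rn} with the same two-step machinery used for $P_m \Box W_n$ in Theorem~\ref{thm:pmwn}: first fix $L_0$ and evaluate $L(P_m\Box K_n)$ so that the bound \eqref{eqn:lb} collapses to the stated value, and then exhibit an ordering $O(V(P_m\Box K_n))=(x_0,\ldots,x_{p-1})$ meeting conditions (a) and (b) of Theorem~\ref{thm:main2}, so that \eqref{eqn:vp0}--\eqref{eqn:vp1} is an optimal labelling. Throughout $p=mn$ and, since $\diam(P_m)=m-1$ and $\diam(K_n)=1$, the diameter is $d=m$; moreover $d((u_i,v),(u_{i'},v'))=|i-i'|$ if $v=v'$ and $|i-i'|+1$ otherwise.

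For $m$ even I would take $L_0$ to be the two central copies of $K_n$, namely $L_0=\{(u_{m/2},v_j),(u_{m/2+1},v_j):1\le j\le n\}$, an induced $P_2\Box K_n$, so that $k=\diam(L_0)=2$ and $\delta=0$. Since every vertex satisfies $d((u_i,v),L_0)=\min(|i-m/2|,|i-(m/2+1)|)$, the layers are governed purely by the path coordinate: $|L_0|=2n$ and $|L_\ell|=2n$ for $1\le \ell\le m/2-1$, whence $L(P_m\Box K_n)=\frac{nm(m-2)}{4}$. Substituting $p$, $d$, $k$, $\delta$ and this value into \eqref{eqn:lb} reduces to $\frac12(m^2n-2m+2)$, the claimed value. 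For tightness I would build the ordering in the spirit of the even case of Theorem~\ref{thm:pmwn}: permute the $K_n$-coordinates so that consecutive vertices always carry distinct coordinates (forcing the $+1$ from $\diam(K_n)$), and alternate between the two sides of the central pair of columns so that consecutive vertices sit on opposite sides with matched outward levels. Then $d(x_i,x_{i+1})=d(x_i,L_0)+d(x_{i+1},L_0)+2$ by equality in \eqref{eqn:dist}, condition (a) holds since both endpoints can be placed in $L_0$, and condition (b) is verified by the block decomposition used in the proof of Theorem~\ref{thm:pmwn}: for blocks that are far apart the right-hand side of \eqref{eqn:dij} is negative, while for adjacent or equal blocks one checks it against $d(x_i,x_j)\in\{1,2\}$.

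The genuinely hard part is $m$ odd. The natural analogue is $L_0=\{(u_{(m+1)/2},v_j):1\le j\le n\}$, the single central copy of $K_n$, giving $k=1$, $\delta=0$, $|L_\ell|=2n$ for $1\le \ell\le (m-1)/2$ and $L(P_m\Box K_n)=\frac{n(m^2-1)}{4}$. Substituting into \eqref{eqn:lb} yields $\frac12(m^2n-2m+n)$, which is \emph{exactly one less} than the target $\frac12(m^2n-2m+n+2)$. A check of the competing symmetric choices (a single vertex, or two off-centre columns with $k=2$) shows each produces a strictly smaller bound, so the improved lower bound of Theorem~\ref{thm:lb} is truly short by one in the odd case, and Theorem~\ref{thm:main2} cannot certify optimality directly. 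This unit gap is the main obstacle.

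I would close it as follows. By Theorem~\ref{thm:main2}, $\rn$ equals $\frac12(m^2n-2m+n)$ for this $L_0$ if and only if an ordering satisfying (a),(b) exists; the plan is therefore to prove that for odd $m$ \emph{no} such ordering exists, which combined with \eqref{eqn:lb} forces $\rn(P_m\Box K_n)\ge \frac12(m^2n-2m+n)+1$. The expected mechanism is a parity/counting argument: an extremal ordering would require equality in \eqref{eqn:dij} at essentially every step, i.e.\ every consecutive pair on opposite sides of the centre with differing $K_n$-coordinates and both endpoints in the odd-sized middle column, and one shows this alternation pattern is combinatorially unrealisable when $m$ is odd. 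Establishing this non-existence cleanly is where the real work lies. Once the sharpened bound $\frac12(m^2n-2m+n+2)$ is in hand, an ordering modelled on the odd-$m$ construction for $P_m\Box W_n$ (with its three-column head block absorbing the $+1$) should supply a radio labelling of exactly that span, completing the proof.
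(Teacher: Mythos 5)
Your even case is sound and follows the paper's intended route: with $L_0$ the two central copies of $K_n$ one gets $k=2$, $\delta=0$, $L(P_m\Box K_n)=\frac{1}{4}mn(m-2)$, and \eqref{eqn:lb} evaluates to $\frac{1}{2}(m^2n-2m+2)$; note that your value of $L$ is the one that actually makes this work (the paper's displayed $\frac{1}{2}mn(m-2)$ must be a misprint, since it would give only $mn-m+1$). Your odd-case computation is also correct, and it exposes a real defect in the paper's own sketch: with $L_0$ the central copy of $K_n$ one has $k=1$, $\delta=0$, $L=\frac{1}{4}(m^2-1)n$, and \eqref{eqn:lb} gives $\frac{1}{2}(m^2n-2m+n)$, exactly one less than the right-hand side of \eqref{pmkn:rn}, while the natural alternative choices of $L_0$ give strictly weaker bounds. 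Consequently the paper's assertions that the substitution ``obtained the right-hand side of \eqref{pmkn:rn}'' and that the labeling of Kim et al.\ satisfies (a)--(b) of Theorem \ref{thm:main2} cannot both be read literally for odd $m$: if an ordering satisfying (a)--(b) existed, Theorem \ref{thm:main2} would force $\rn(P_m\Box K_n)=\frac{1}{2}(m^2n-2m+n)$, contradicting \eqref{pmkn:rn}. (This mirrors odd paths, where the analogous lower bound is likewise short by one.)

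The genuine gap is that your odd case is a plan rather than a proof, and both halves of the plan are the hard halves. To raise the bound to $\frac{1}{2}(m^2n-2m+n)+1$ you must show that no ordering satisfies (a) and (b). The consecutive-pair constraints alone only force each pair $x_i,x_{i+1}$ to lie weakly on opposite sides of the central column with distinct $K_n$-coordinates, and left/centre/right patterns with that property do exist; the contradiction has to come from the non-consecutive instances of \eqref{eqn:dij} (for example, the $j=i+2$ instance forbids three consecutive vertices all lying in the two outermost columns, since the side-alternation then puts $x_i$ and $x_{i+2}$ in the same extreme column at distance at most $1$ while the right-hand side of \eqref{eqn:dij} equals $m-1$), and assembling such local prohibitions into a global counting contradiction against $|L_{(m-1)/2}|=2n$ is precisely the substance of Kim et al.'s lower-bound argument, which your proposal does not carry out. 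Second, once the bound exceeds \eqref{eqn:lb}, Theorem \ref{thm:main2} no longer supplies the matching labeling, so you must exhibit an explicit ordering and verify directly, as in the proof of Theorem \ref{thm:pmwn} but without the shortcut of \eqref{eqn:vp1} realizing the lower bound, that its span is $\frac{1}{2}(m^2n-2m+n+2)$. Until both pieces are supplied, the odd case of \eqref{pmkn:rn} remains unproved.
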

Theorem \ref{pmkn:thm} can also be proved using Theorem \ref{thm:main2}. The order and total level of $P_m \Box K_n$ are given by
\begin{equation}\label{pmkn:p}
p:=mn
\end{equation}
\begin{equation}\label{pmkn:l}
L(P_m \Box K_n):= \left\{
\begin{array}{ll}
\frac{1}{2}(mn(m-2)), & \mbox{if $m$ is even}; \\[0.2cm]
\frac{1}{4}((m^2-1)n), & \mbox{if $m$ is odd}.
\end{array}
\right.
\end{equation}
Substituting \eqref{pmkn:p} and \eqref{pmkn:l} in \eqref{eqn:lb} we obtained the right-hand side of \eqref{pmkn:rn} which is a lower bound for the radio number of $P_m \Box K_n$. Now it is easy to prove that the radio labeling given in \cite{Kim} satisfies conditions (a) and (b) of Theorem \ref{thm:main2} and hence the right-hand side of \eqref{eqn:lb} is exact radio number for $P_m \Box K_n$ which is \eqref{pmkn:rn} in the present case.

\section*{Acknowledgement} The author is grateful to the two anonymous referees for their careful reading of this manuscript and for their insightful and helpful comments. The research is supported by Research Promotion under Technical Education - STEM research project grant of Government of Gujarat.


\end{document}